\newtheorem{Th}{Theorem}[section]
\newtheorem{Prop}[Th]{Proposition}
\newtheorem{Lem}[Th]{Lemma}
\newtheorem{Rem}[Th]{Remark}
\newcommand{\eps}{\varepsilon}
\newcommand{\R}{\mathbb{R}}
\newcommand{\Z}{\mathbb{Z}}
\newcommand{\cC}{{\mathcal C}}
\newcommand{\cD}{{\mathcal D}}
\newcommand{\cJ}{{\mathcal J}}
\newcommand{\cL}{{\mathcal L}}
\newcommand{\cP}{{\mathcal P}}
\newcommand{\cS}{{\mathcal S}}
\newcommand{\Ga}{\Gamma}
\newcommand{\weakto}{\rightharpoonup}
\numberwithin{equation}{section}
\begin{document}

\title[Homoclinic solutions to nonautonomous HS with sign-changing nonlinear part]{Note on homoclinic solutions to nonautonomous Hamiltonian systems with sign-changing nonlinear part}

\author[F. Bernini]{Federico Bernini}
	\address[F. Bernini]{
	\newline\indent Dipartimento di Matematica ``Federigo Enriques'',
	\newline\indent Università degli Studi di Milano,
	\newline\indent via C. Saldini 50, 20133 Milano, Italia}	
	\email{\href{mailto:Federico.Bernini@unimi.it}{Federico.Bernini@unimi.it}}	

\author[B. Bieganowski]{Bartosz Bieganowski}
	\address[B. Bieganowski]{
	\newline\indent Faculty of Mathematics, Informatics and Mechanics,
	\newline\indent University of Warsaw,
	\newline\indent ul. Banacha 2, 02-097 Warsaw, Poland}	
	\email{\href{mailto:bartoszb@mimuw.edu.pl}{bartoszb@mimuw.edu.pl}}	
	
\author[D. Strzelecki]{Daniel Strzelecki}
	\address[D. Strzelecki]{
	\newline\indent Faculty of Mathematics, Informatics and Mechanics,
	\newline\indent University of Warsaw,
	\newline\indent ul. Banacha 2, 02-097 Warsaw, Poland}	
	\email{\href{mailto:dstrzelecki@mimuw.edu.pl}{dstrzelecki@mimuw.edu.pl}}

	\pagestyle{myheadings} \markboth{\underline{F. Bernini, B. Bieganowski, D. Strzelecki}}{
		\underline{TBA}}

\newcommand{\spann}{\mathrm{span}\,}

\newcommand{\triple}[1]{{\left\vert\kern-0.25ex\left\vert\kern-0.25ex\left\vert #1 
    \right\vert\kern-0.25ex\right\vert\kern-0.25ex\right\vert}}

\begin{abstract} 
In the paper, we utilize the recent variational, abstract theorem to show the existence of homoclinic solutions to the Hamiltonian system 
$$
\dot{z} = J D_z H(z, t), \quad t \in \R,
$$
where the Hamiltonian $H : \mathbb{R}^{2N} \times \R \rightarrow \R$ is of the form
$$
H(z, t) = \frac12 Az \cdot z + \Gamma(t) \left( F(z) - \lambda G(z) \right)
$$
for some symmetric matrix $A$. 

\medskip

\noindent \textbf{Keywords:} Hamiltonian systems, generalized linking theorem, sign-changing nonlinearity, homoclinic solutions.
   
\noindent \textbf{AMS 2020 Subject Classification:}
                    37C29, 
                    37J46, 
                    35A15, 
                    58E05. 
\end{abstract}

\maketitle

\section{Homoclinic solutions to Hamiltonian systems}
In classical Hamiltonian mechanics, the Lagrangian $\cL$ associated with a system is a function that summarizes the dynamics of the entire system, which, in contrast with the Newtonian mechanics that uses forces, considers the energies (kinetic and potential) to generate the equations of motions through the stationary action principle. In this context it is possible to recover the \textit{Euler-Lagrange equations of the motions} for a set of $N$ particles $q_1(t), \dots, q_N(t)$ (with $N$ denoting the space dimension), that is
\[
    \frac{\partial\cL}{\partial q_i} - \frac{d}{dt}\frac{\partial\cL}{\partial\dot{q}_i} = 0.
\]
By computing the Legendre transform of the Lagrangian, we obtain the associated \textit{Hamiltonian equations in $2N$ dimensions}, which, roughly speaking, replaces the velocities used in Newtonian mechanics with the momenta $p_i$ of the particles, and are given by
\begin{equation}\label{H:system}
    \begin{cases}
        \dot{q}=\frac{\partial H}{\partial p}\\
        \dot{p}=-\frac{\partial H}{\partial q},
    \end{cases}
\end{equation}
where $H$ denotes the \textit{Hamiltonian function}.

Despite the names, Hamilton's equations first appear in a paper by Lagrange on perturbation theory, and just later on used by Hamilton as foundations of his analytical mechanics and to give the first exact formulation of the least action principle.

Hamiltonian systems give a good description of those physical phenomena where the energy is (approximately) conserved, from planetary orbits to the motion of particles. Some examples are given by a point of mass $m$ on a line in the presence of a potential, the pendulum, an electric charge $e$ in a given electromagnetic field, as well as the mixing of fluids, and the ray equations describing the trajectories of propagating waves. In particular, Hamiltonian systems find a deep role in the study of celestial mechanics, especially in the $N-$body problem, which is a system of equations that describes the motion of $N-$points masses moving under the influence of their mutual gravitation attraction (we refer to \cite{Arnold,MaWi,MeHa} for a more precise introduction).

From a mathematical point of view, Hamiltonian systems are interesting because of the methods one needs to use. Though the systems admit a variational structure, the associated (action) functional is indefinite (see \cite[Chapter 3]{MaWi}). Therefore, direct methods of the variational approach are hard to apply. For this reason, other theories were (and still are) used in the study of these problems, for instance, topological methods (\cite{GidL,Moser1976,St,Weinstein1973}) and numerical methods - KAM theory (\cite{CaCedL,Sevryuk}).

As already mentioned, the main difficulty in applying variational techniques is given by the strong indefiniteness of the problem: Linking Theorem proved by Rabinowitz (see \cite{Rab78}) was the first milestone allowing a variational approach to this problem. Roughly, this result states that if the problem is settled in a bounded domain and it can be orthogonally split in the direct sum of two subspaces, one of them being finite-dimensional, plus a geometry condition, then the functional associated with the problem has a critical point. In 1979, Benci and Rabinowitz in \cite{BenciRabinowitz79} extended the Linking Theorem allowing a decomposition where both of the subspaces could be of infinite dimension: this result found great applications in the next decades in critical point theory, e.g. see \cite{Benci82,Hofer83,HeinzStuart92,BartschWang97,AbbondandoloMolina00}. At the end of the 90s, Kryszewski and Szulkin extended the Rabinowitz Linking Theorem by considering a new topology (called \textit{weak-strong topology}) that yields the introduction of a new degree of Leray-Schauder type (see \cite{KS}). However, in all the works mentioned until now, it is required that the nonlinearity is nonnegative. Very recently, a further extension was given by the first two authors to sign-changing nonlinearity (\cite{BB}, see also \cite{CW}).

As already remarked above, due to their nature, dynamical systems, and so in particular Hamiltonian systems, are indefinite; therefore, the Linking-type theorems have been widely used to variationally find homoclinic solutions as long as a nonnegative nonlinearity was considered because of the application of Rabinowitz or Kryszewski-Szulkin linking theorem. Our aim in this paper is to provide an existence results via variational method without sign assumptions on the nonlinearity. Let us explain in detail.
                   
Let 
\[
    J := \left[ \begin{array}{cc}
    0 & -I \\
    I & 0
    \end{array} \right] \in \mathbb{M}_{2N \times 2N} (\R)
\]
denote the standard symplectic matrix. We rewrite \eqref{H:system} in the following form 
\begin{equation}\label{eq:hamiltonian}
    \dot{z} = J D_z H (z,t), \quad t \in \R,
\end{equation}
where $z = z(t) \in \R^{2N}$. 
The existence, multiplicity, and properties of solutions to \eqref{eq:hamiltonian} were studied using variational methods (\cite{CZES, SZ, DL, DW, Se1, Se2, HW}) and topological arguments (\cite{B, SW, St}). We emphasize that, to the best of our knowledge, the existence of homoclinic solutions for such systems with sign-changing superquadratic part of the Hamiltonian has not been studied yet. We aim to show the existence of a homoclinic solution to \eqref{eq:hamiltonian} by application of the recent variational approach introduced in \cite{BB} for the Schr\"odinger equation.

Recall that a solution $z : \R \rightarrow \R^{2N}$ is called \textit{homoclinic to 0} if $z \neq 0$ and $z(t) \to 0$ as $|t|\to\infty$. We consider the nonautonomous Hamiltonian $H : \R^{2N} \times \R \rightarrow \R$ of the form
\[
	H(z, t) = \frac12 Az \cdot z + \Gamma(t) \left( F(z) - \lambda G(z) \right), \quad \mbox{with } F(0) = G(0) = 0,
\]
where $F, G : \R^{2N} \rightarrow \R$ are nonlinear functions and $\Gamma : \R \rightarrow (0,+\infty)$. Hence we consider Hamiltonians with a super-quadratic part that is nonautonomous of separated variables, that may be sign-changing. Hereafter we consider a general variable $z$ and we do not split it into \textit{(position, momentum)} pair. The symbols $p$ and $q$ will be used in the different contexts.

Below, $|\cdot |$ denotes the Euclidean norm, $\| \cdot \|_k$ the usual $L^k$-norm and $\lesssim$ the inequality up to a multiplicative constant.

We assume that the matrix $A$ satisfies
\begin{itemize}
\item[(A)] $A \in \mathbb{M}_{2N \times 2N} (\R)$ is a symmetric matrix with $\sigma( JA) \cap i  \R = \emptyset$.
\end{itemize}
We assume also that $F, G : \R^{2N} \rightarrow \R$, $\Gamma : \R \rightarrow (0,+\infty)$ satisfy the following assumptions.
\begin{enumerate}
\item[($\Gamma$)] $\Gamma \in L^\infty (\R)$ is $1$-periodic and $\Ga_0 := \inf_\R \Gamma > 0$.
\item[(F1)] $f := \nabla F : \R^{2N} \to \R^{2N}$ is continuous and odd, and there is $p > 2$ such that
\[
	|f(z)| \lesssim 1+|z|^{p-1} \text{ for all } z \in \R^{2N}.
\]
\item[(F2)] $f(z)=o(|z|)$ as $z \to 0$.
\item[(F3)] There is $q \in (2,p)$ such that $F(z)/|z|^q \to +\infty$ as $|z| \to +\infty$ and $F(z) \geq 0$ for all $z \in \R^{2N}$.
\item[(F4)] The map $\zeta \mapsto \frac{f(\zeta z) \cdot z}{\zeta^{q-1}}$ is nondecreasing on $(0, +\infty)$.
\item[(F5)] There is $\rho > 0$ such that $|f(z) \cdot z| \gtrsim |z|^{p}$ for $|z| \geq \rho$.
\item[(G1)] $g:= \nabla G : \R^{2N} \to \R^{2N}$ is odd, continuous and
\[
	|g(z)| \lesssim 1+|z|^{q-1} \text{ for all } z \in \R^{2N},
\]
where $q$ is given in (F3).
\item[(G2)] $g(z)=o(|z|)$ as $z \to 0$.
\item[(G3)] The map $\zeta \mapsto \frac{g(\zeta z) \cdot z}{\zeta^{q-1}}$ is nonincreasing on $(0,+\infty)$ and there holds
\[
	g(z) \cdot z \geq 0 \text{ for all } z \in \R^{2N}.
\]
\end{enumerate}
These assumptions were considered in the scalar case in \cite{BB}.
In the vectorial case, in addition, we need to consider the following condition.
\begin{enumerate}
\item[(FG)] $|f(z)\cdot w|\gtrsim |g(z)\cdot w||z|^{p-q}$, for $|z|\geq \rho$ and $w\in\R^{2N}$, where $\rho$ is given in (F5).
\end{enumerate}
In particular one can consider $F(z) = \frac1p |z|^p$ and $G(z) = \frac1q |z|^q$, where $2 < q < p$. For more examples we refer the reader to \cite{BB}.

We aim to prove the following result.
\begin{Th}\label{existence:theorem}
Suppose that (A), ($\Ga$), (F1)--(F5), (G1)--(G3), (FG) hold. Then, if $\rho > 0$ and $\lambda > 0$ are sufficiently small, there exists a nontrivial, homoclinic solution $z \in H^{1} (\R; \R^{2N})$ of \eqref{eq:hamiltonian}.
\end{Th}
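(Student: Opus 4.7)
The plan is to recast \eqref{eq:hamiltonian} as a critical point problem for the action functional
\[
	\mathcal{J}(z) = \int_{\R}\left[-\tfrac{1}{2}J\dot{z}\cdot z - \tfrac{1}{2}Az\cdot z\right]dt - \int_{\R}\Gamma(t)\bigl(F(z) - \lambda G(z)\bigr)\,dt,
\]
and then apply the abstract linking-type theorem of \cite{BB}. First, assumption (A) guarantees that the selfadjoint operator $L := -J\frac{d}{dt} - A$ on $L^2(\R,\R^{2N})$ has $0 \notin \sigma(L)$, so that its spectrum splits into a positive and a negative part separated from $0$. This yields a natural Hilbert space $E := D(|L|^{1/2})$, continuously embedded in $H^{1/2}(\R,\R^{2N})$ and hence, by Sobolev embeddings, in every $L^s(\R,\R^{2N})$ with $s \in [2,\infty)$. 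Writing $E = E^+ \oplus E^-$ for the corresponding spectral decomposition and $\|z\|^2 := \langle |L|z^+,z^+\rangle_{L^2} + \langle |L|z^-,z^-\rangle_{L^2}$, the quadratic part of $\mathcal{J}$ becomes $\tfrac{1}{2}\|z^+\|^2 - \tfrac{1}{2}\|z^-\|^2$, so that $\mathcal{J}$ is strongly indefinite with the structural form required by the abstract framework.

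Next, I would verify the geometric hypotheses of the abstract linking theorem of \cite{BB}. Using (F1), (F2), (G1), (G2) together with the Sobolev embeddings, the nonlinear term satisfies $\int \Gamma(F - \lambda G) = o(\|z\|^2)$ as $\|z\|\to 0$, so $\mathcal{J}$ is bounded below on a small sphere in $E^+$. The superquadratic condition (F3), combined with (G1) and smallness of $\lambda$, provides a direction $e \in E^+$ along which $\mathcal{J}$ becomes negative on large subsets of $E^- \oplus \R^+ e$, yielding the required link. The monotonicity assumptions (F4), (G3) give the Nehari-Pankov manifold structure used in \cite{BB} to characterize the linking level as a minimax over a convex generalized Nehari set, and (F5) provides the needed coercivity along rays.

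The crucial step is verifying the Cerami-type compactness condition, i.e.\ showing that bounded Cerami sequences $\{z_n\}\subset E$ either vanish (and one can rule this out via the linking level being positive) or, after $\Z$-translations allowed by the $1$-periodicity of $\Gamma$, converge weakly to a nontrivial critical point of $\mathcal{J}$. Boundedness of Cerami sequences relies on (F3)--(F5) in the same way as in the scalar case treated in \cite{BB}; the genuinely new point is that in the vectorial setting one must estimate pairings of the form $\int \Gamma f(z_n)\cdot w$ and $\int \Gamma g(z_n)\cdot w$ against test vectors $w$ (not merely against $z_n$ itself). This is exactly where assumption (FG) enters: it lets one dominate $|g(z_n)\cdot w|$ by $|f(z_n)\cdot w|$ on $\{|z_n|\ge \rho\}$, up to the factor $|z_n|^{q-p}$, and together with (G2) on $\{|z_n|<\rho\}$ and the smallness of $\lambda$ and $\rho$, it guarantees that $\lambda g$ cannot spoil the estimates derived from $f$. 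The hard part, as in \cite{BB}, is to show that the weak limit is nontrivial; this will combine the $\Z$-translation trick with a nonvanishing lemma of Lions type applied to a suitable $L^q$-norm of $z_n$, which is nonzero thanks to the linking level.

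Once a nontrivial critical point $z \in E$ of $\mathcal{J}$ is produced, a bootstrap argument based on (F1) and (G1) upgrades $z \in H^{1/2}$ to $z \in H^{1}(\R,\R^{2N})$, hence $z \in C(\R,\R^{2N})$ with $z(t)\to 0$ as $|t|\to\infty$; this yields the desired homoclinic solution and completes the proof. The main obstacle throughout is the sign-changing character of $F - \lambda G$, which rules out the classical Rabinowitz and Kryszewski--Szulkin frameworks and is precisely what the abstract theorem of \cite{BB} is designed to handle, provided the vectorial coupling (FG) is assumed.
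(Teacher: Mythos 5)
Your sketch follows essentially the same route as the paper: spectral splitting via (A), verification of the linking geometry and the $\tau$-topological abstract theorem from \cite{BB}, boundedness of Cerami sequences using (FG) to handle the vectorial pairings against $z_n^\pm$, Lions-type concentration-compactness with $\Z$-translations by periodicity of $\Gamma$, and a bootstrap to $H^1$ regularity. One descriptive inaccuracy: you invoke a ``Nehari--Pankov manifold structure'' and a ``minimax over a convex generalized Nehari set,'' but neither the paper nor \cite{BB} uses a Nehari manifold; the minimax level is taken over $\tau$-admissible deformations $h$ (conditions (h1)--(h4)), in the style of Kryszewski--Szulkin, and the monotonicity assumptions (F4), (G3) are instead used to derive Ambrosetti--Rabinowitz-type inequalities ($0 \leq qF \leq f\cdot z$ and $0 \leq g\cdot z \leq qG$) which feed the geometric condition (A3) and the Cerami boundedness estimate. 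This is a mischaracterization of the machinery rather than a gap in your argument, since you do not rely on the Nehari structure for any concrete step.
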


The paper is organized as follows. In Section \ref{sect:functional} we introduce the functional setting and the variational formulation of the problem. Section \ref{sect:abstract} is devoted to the abstract critical point theory, it introduces the topology on the energy space based on \cite{KS}, and the abstract theorem from \cite{BB}. In Section \ref{sect:4} we verify all the assumptions of the abstract theorem and therefore we find a Cerami sequence that is not vanishing - see \eqref{Cer}. Then, in Section \ref{sect:5} we verify the boundedness of such a sequence, which allows us to conclude the proof of the existence of homoclinic solutions in Section \ref{sect:6} using concentration-compactness principles.

\section{Functional setting}\label{sect:functional}

Consider the fractional Sobolev space $X := H^{1/2} (\R; \R^{2N}) := \left( H^{1/2} (\R; \R) \right)^{2N}$, where
$$
H^{1/2} (\R; \R) := \left\{ w \in L^2 (\R) \ : \ \iint_{\R \times \R} \frac{|w(t)-w(s)|^2}{|t-s|^2 } \, dt \, ds < +\infty \right\}
$$
and define the functional $\cJ : X \rightarrow \R$ by
\begin{equation}
\label{energy:functional}
	\cJ(z) := \frac12 \int_{\R} (-J \dot{z} - Az ) \cdot z \, dt - \int_{\R} \Ga(t) F(z) \, dt + \lambda \int_{\R} \Ga(t) G(z) \, dt.
\end{equation}
Observe that $X = \cD( |\mathbf{A}|^{1/2} )$ is the domain of $|\mathbf{A}|^{1/2}$, where $\mathbf{A} z := -J \dot{z} - Az$. Then, under assumption (A) - see \cite{Arioli, Stuart, SZ}, there is an orthogonal splitting $X = X^+ \oplus X^-$ such that the quadratic form
\[
	z \mapsto \int_{\R} (-J \dot{z} - Az ) \cdot z \, dt
\]
is positive definite on $X^+$ and negative definite on $X^-$. Hence we set
\[
	\| z^+ \|^2 := \int_{\R} (-J \dot{z}^+ - Az^+ ) \cdot z^+ \, dt
\]
and
\[
	\| z^- \|^2 := -\int_{\R} (-J \dot{z}^- - Az^- ) \cdot z^- \, dt.
\]
Thus, we introduce the norm on $X$ given by
\[
	\|z\|^2 := \|z^+\|^2 + \|z^-\|^2, \quad z = z^+ + z^- \in X.
\]
Then, under (F1), (F2), (G1), (G2), $\cJ$ is of $\cC^1$ class on $X$ and its critical points are weak solutions to \eqref{eq:hamiltonian}. Namely, $z \in X$ is a weak solution if
\begin{equation}\label{eq:weakSol}
0 = \cJ'(z)(v) = \int_{\R} (-J \dot{z} - Az) \cdot v \, dt - \int_\R \Ga(t) f(z) \cdot v \, dt + \lambda \int_\R \Ga(t) g(z) \cdot v \, dt
\end{equation}
for every $v \in X$. However, note that if $z \in X$, then $z \in L^s (\R; \R^{2N})$ for every $s \in [2, \infty)$ due to Sobolev embeddings. Then, in view of ($\Ga$), (F1) and (F2), we note that $\Gamma(\cdot) f(z(\cdot)) \in L^2 (\R; \R^{2N})$. Similarly $\Gamma(\cdot) g(z(\cdot)) \in L^2 (\R; \R^{2N})$. Hence, if $z \in X$ is a critical point of $\cJ$, \eqref{eq:weakSol} leads to $\dot{z} \in L^2(\R; \R^{2N})$ and therefore $z \in H^1 (\R; \R^{2N})$. Since $H^1 (\R; \R^{2N}) \subset \cC (\R; \R^{2N})$, $z(t) \to 0$ as $|t| \to \infty$. Thus every critical point of $\cJ$ is a homoclinic solution to \eqref{eq:hamiltonian}, cf. \cite[Lemma 10.6]{Stuart}, \cite[Proposition 3.2]{Arioli}.

We recall that the projections $X \to X^{\pm}$ are continuous in $L^q(\R; \R^{2N})$ for $q > 2$, in the sense that there is $\kappa \geq 1$ such that 
\begin{equation}
\label{projection:inequality}
	\|z^{\pm}\|_q \leq \kappa \|z\|_q
\end{equation}
for $z \in X$ (see \cite[Proposition 7]{troestler2012bifurcation}). Moreover, by (A), there exists a constant $\mu_0>0$ such that $\| \cdot \|$ satisfies
\begin{equation}
	\label{mu0:inequality}
		\mu_0\|z\|_2 \leq \|z\|, \quad z \in X.
\end{equation}

\section{\texorpdfstring{$\tau$}{tau}-topology and critical point theory}\label{sect:abstract}

Let $(X, \| \cdot \|)$ be a real, separable Hilbert space with the inner product denoted by $\langle \cdot, \cdot \rangle$. Assume that there is an orthogonal splitting $X = X^+ \oplus X^-$. Then every $z \in X$ has a unique decomposition $z = z^+ + z^-$, where summands satisfy $z^\pm \in X^\pm$. We introduce a new topology $\tau$ in the space $X$, see \cite{KS}. Let $(e_k)_{k=1}^\infty \subset X^-$ be a complete orthonormal sequence in the space $X^-$. Then we define a norm $\triple{\cdot}$ in $X$ by
\[
	\triple{z} := \max \left\{ \| z^+ \|, \sum_{k=1}^\infty \frac{1}{2^{k+1}} \left| \langle z^-, e_k \rangle \right| \right\}.
\]
Let $\tau$ denote the topology on $X$ generated by $\triple{\cdot}$. We note that $\tau$ is weaker than the topology generated by the norm $\| \cdot \|$ and that the following inequalities hold
\[
	\|z^+\| \leq \triple{z} \leq \|z\|.
\]
We also recall that for bounded sequences $(z_n) \subset X$ the following equivalence holds true (see e.g. \cite[Remark 2.1(iii)]{KS}) 
\[
z_n \stackrel{\tau}{\to} z \quad \Longleftrightarrow \quad z_n^+ \to z^+ \mbox{ and } z_n^- \weakto z^-. 
\]

Let $\cJ : X \rightarrow \R$ be a nonlinear functional. For $z \in X \setminus X^-$ and $R > r > 0$ we introduce the following sets:
\begin{align*}
\cS^+_r &:= \{ z^+ \in X^+ \ : \ \|z^+\| = r \} \\
M(z) &:= \{ tz + v^- \ : \ v^- \in X^-, \ t \geq 0, \, \|tz + v^- \| \leq R \}.
\end{align*}
It is clear that $M(z) \subset \R_+ z^+ \oplus X^-$, where $\R_+ := [0,\infty)$, is a manifold with the boundary
\[
	\partial M(z) = \{ v^- \in X^- \ : \ \| v^- \| \leq R \} \cup \{tz + v^- \ : \ v^- \in X^-, \ t > 0, \ \|tz + v^- \|=R \}.
\]
We are working under the following assumptions.
\begin{itemize}
\item[(A1)] $\cJ$ is of $\cC^1$-class and $\cJ(0) = 0$.
\item[(A2)] $\cJ'$ is sequentially weak-to-weak* continuous.
\end{itemize}
Let $\cP \subset X \setminus X^-$ be a nonempty set. We assume that
\begin{itemize}
\item[(A3)] there are $\delta > 0$ and $r > 0$ such that for every $z \in \cP$ there is radius $R=R(z) > r$ with
\[
	\inf_{\cS_r^+} \cJ > \max \left\{ \sup_{\partial M(z)} \cJ, \sup_{\triple{v} \leq \delta} \cJ(v) \right\}.
\]
\end{itemize}
Let $A \subset X$ and $I :=[0,1]$. Let $h : A \times I \rightarrow X$. We consider the following conditions. 
\begin{itemize}
\item[(h1)] $h$ is $\tau$-continuous, i.e. $h(v_n, t_n) \stackrel{\tau}{\to} h(v, t)$ for $v_n \stackrel{\tau}{\to} v$ and $t_n \to t$;
\item[(h2)] $h(z,0)=z$ for $z \in A$;
\item[(h3)] $\cJ(z)\geq \cJ(h(z,t))$ for $(z,t) \in A \times I$;
\item[(h4)] for every $(z,t) \in A \times I$ there is an open - in the product topology of $(X, \tau)$ and $(I, |\cdot |)$ - neighborhood $W \subset X \times I$ of $(z,t)$ such that $\{v - h(v,s) \ : \ (v,s) \in W \cap (A \times I)\}$ is contained in a finite-dimensional subspace of $X$.
\end{itemize}

\begin{Th}[{\cite[Theorem 2.1]{BB}}]\label{th:abstrExistence}
Suppose that $\cJ$ satisfy (A1)--(A3). Then there is a  Cerami sequence $(z_n) \subset X$ bounded away from zero, i.e. a sequence such that
\begin{equation}\label{Cer}
    \sup_n \cJ(z_n) \leq c, \quad (1+\|z_n\|) \cJ'(z_n) \to 0 \mbox{ in } X^*, \quad \inf_n \triple{z_n} \geq \frac{\delta}{2},
\end{equation}
where
\[
	c := \inf_{z \in \cP} \inf_{h \in \Gamma(z)} \sup_{z' \in M(z)} \cJ(h(z',1)) \geq \inf_{\cS_r^+} \cJ > 0
\]
and
\[
	\Gamma(z) := \left\{ h \in \cC(M(z) \times [0,1]) \ : \ h \mbox{ satisfies (h1)--(h4)} \right\} \neq \emptyset.
\]
 \end{Th}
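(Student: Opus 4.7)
The plan is to verify that the functional $\cJ$ defined in \eqref{energy:functional} fits the abstract framework of Theorem \ref{th:abstrExistence}, produce a Cerami sequence via that theorem, prove its boundedness, and then extract a nontrivial homoclinic solution by concentration–compactness exploiting the $1$-periodicity in $\Ga$. First, I would record that (A1) is immediate from (F1), (F2), (G1), (G2), which yield the $\cC^1$ regularity of $\cJ$ on $X$ and $\cJ(0)=0$. For (A2), the sequential weak-to-weak$^\ast$ continuity of $\cJ'$ follows from the Rellich-type local compact embeddings of $H^{1/2}(\R;\R^{2N})$ into $L^s_{\loc}$ for $s\in[2,\infty)$, together with the growth controls on $f,g$ in (F1), (G1), by a standard pointwise plus dominated-convergence argument.

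Next I would establish the linking geometry (A3). For a fixed $e\in X^+\setminus\{0\}$, set $\cP:=\{e\}$ (or a family of such vectors). The bounded set $\sup_{\triple{v}\le\delta}\cJ(v)$ is controlled via (F2), (G2), the continuity of Sobolev embeddings, and the fact that $\triple{\cdot}$ bounds $\|v^+\|$ and the weak behavior of $v^-$: for $v^+$ small in norm and $v^-$ weakly small, the nonlinear terms become negligible compared to $\tfrac12\|v^+\|^2$, giving $\cJ(v)\le\tfrac14\|v^+\|^2$ on a $\triple{\cdot}$-ball, cf.~\cite{BB}. The lower bound $\inf_{\cS_r^+}\cJ>0$ for some small $r>0$ uses (F1), (F2), (G1), (G2) to dominate the nonlinear part by $\eps\|z^+\|_2^2+C_\eps\|z^+\|_p^p$. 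For the upper bound on $\partial M(e)$, I use (F3): since $F(z)/|z|^q\to+\infty$ and $F\ge 0$, on the half-ray $\R_+e\oplus X^-$ the quadratic term $-\tfrac12\|v^-\|^2$ on $X^-$ dominates together with the superquadratic $-\Ga_0 F(te+v^-)$, so for $R=R(e)$ sufficiently large, $\cJ\le 0$ on $\partial M(e)$. Here the smallness of $\lambda$ is used so that $\lambda\Ga(t)G(z)$ does not destroy the geometry.

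With (A1)--(A3) in hand, Theorem \ref{th:abstrExistence} provides a Cerami sequence $(z_n)\subset X$ satisfying \eqref{Cer}. I then prove $\sup_n\|z_n\|<\infty$ (this is carried out in Section \ref{sect:5}): the standard trick tests $\cJ'(z_n)$ against $z_n^+-z_n^-$ and combines it with $\cJ(z_n)\le c$. The super-quadraticity from (F3)–(F5) together with (F4) yields a coercivity-type control on $\int\Ga F$, while (G3) guarantees that $\lambda\Ga G$ is controlled from above by $\lambda$ times the same quantity via the Tanaka-type convexity estimate. The crucial point where the vectorial assumption (FG) enters is in controlling the cross term $\int\Ga g(z_n)\cdot v$ by $\int\Ga |f(z_n)\cdot v|$ uniformly for $|z_n|\ge\rho$; for $|z_n|<\rho$, (F2), (G2) give subquadratic control via \eqref{mu0:inequality}. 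Taking $\lambda$ small completes the boundedness argument.

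Finally, I extract a nontrivial critical point. Since $\triple{z_n}\ge\delta/2$, either $\|z_n^+\|\not\to 0$ or $z_n^-$ is nonvanishing along the $\triple{\cdot}$-norm; in either case a concentration–compactness lemma of Lions type on $H^{1/2}(\R;\R^{2N})$ applied to $|z_n|$ yields $t_n\in\R$ (which, by the $1$-periodicity in ($\Ga$), can be replaced by integers $k_n\in\Z$) and a radius so that $\int_{k_n-1}^{k_n+1}|z_n|^2\,dt\gtrsim 1$. Setting $\wti z_n(\cdot):=z_n(\cdot+k_n)$, the periodicity of $\Ga$ ensures $\cJ(\wti z_n)=\cJ(z_n)$ and $\cJ'(\wti z_n)\to 0$, and $\wti z_n$ has a nonzero weak limit $z\in X$; by (A2), $\cJ'(z)=0$, so $z$ is a nontrivial weak solution. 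The regularity upgrade from $H^{1/2}$ to $H^1$ and the decay $z(t)\to 0$ as $|t|\to\infty$ are obtained exactly as in the discussion after \eqref{eq:weakSol}. I expect the main technical obstacle to be the boundedness of the Cerami sequence; this is where the smallness of $\lambda$, assumption (FG), and the monotonicity conditions (F4), (G3) must be combined carefully to dominate the sign-changing term $-\lambda\Ga G$.
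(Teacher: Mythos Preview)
Your proposal is not a proof of the stated theorem at all. Theorem~\ref{th:abstrExistence} is an \emph{abstract} critical point theorem, quoted verbatim from \cite[Theorem~2.1]{BB}; the paper gives no proof of it and simply invokes it as a black box. A proof of this statement would require the deformation machinery in the $\tau$-topology of \cite{KS}: one shows that $M(z)$ $\tau$-links $\cS_r^+$ via a Leray--Schauder-type degree argument (using (h4) to reduce to finite dimensions), then runs a quantitative deformation lemma to produce the Cerami sequence with the lower bound $\triple{z_n}\ge\delta/2$. None of this appears in your write-up.

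What you have actually sketched is a proof of Theorem~\ref{existence:theorem}, the main result of the paper: you verify (A1)--(A3) for the concrete functional~\eqref{energy:functional}, apply Theorem~\ref{th:abstrExistence} to obtain a Cerami sequence, prove its boundedness, and then pass to a translated weak limit. That is precisely the content of Sections~\ref{sect:4}--\ref{sect:6}, and your outline tracks those sections fairly closely (with the minor difference that the paper takes $\cP=X^+\setminus\{0\}$ rather than a single vector). But as a proof of the abstract linking theorem itself, it is circular: you are assuming the very statement you are asked to prove.
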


\section{Existence of a Cerami sequence - verification of (A1)--(A3)}\label{sect:4}

We will show that, under the assumptions of Theorem \ref{existence:theorem}, there is a Cerami sequence for $\cJ$ bounded away from zero. Hence, we will check conditions (A1)--(A3) of Theorem \ref{th:abstrExistence} with
\[
	\cP := X^+ \setminus \{0\}.
\]
It is classical to check the following useful estimates for $F$ and $G$.
\begin{Prop}
	\label{f,g:growth}
	Suppose (F1)-(F2) and (G1)-(G2) hold, then for every $\eps > 0$ there exists $C_{f,\eps}>0$ and $C_{g,\eps} > 0$ such that
	\begin{align}
		\label{f:growth}
		|f(z)| &\leq \eps|z| + C_{f,\eps}|z|^{p-1},\\
		\label{g:growth}
		|g(z)| &\leq \eps|z| + C_{g,\eps}|z|^{q-1}.
	\end{align}
\end{Prop}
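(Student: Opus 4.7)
The estimates are a standard combination of the local behavior near zero from (F2)/(G2) with the global polynomial growth from (F1)/(G1). I only write the argument for $f$; the proof for $g$ is identical with $q$ in place of $p$.

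The plan is as follows. Fix $\eps > 0$. By (F2), $f(z) = o(|z|)$ as $z \to 0$, so there exists $\delta = \delta(\eps) > 0$ such that $|f(z)| \leq \eps |z|$ whenever $|z| \leq \delta$. For $|z| \geq \delta$, I would use (F1), which gives a constant $C > 0$ with $|f(z)| \leq C(1 + |z|^{p-1})$. Since $p > 2 > 1$ and $|z| \geq \delta$, we have $1 \leq \delta^{1-p} |z|^{p-1}$, so $|f(z)| \leq C(1 + \delta^{1-p})|z|^{p-1}$. Setting $C_{f,\eps} := C(1 + \delta^{1-p})$ and noting that both $\eps|z|$ and $C_{f,\eps}|z|^{p-1}$ are nonnegative, we obtain the desired bound $|f(z)| \leq \eps|z| + C_{f,\eps}|z|^{p-1}$ in both regimes and hence for all $z \in \R^{2N}$.

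The argument for $g$ is completely analogous: (G2) provides a neighborhood of zero on which $|g(z)| \leq \eps|z|$, and on the complement (G1) together with $q > 2$ yields a bound of the form $|g(z)| \leq C_{g,\eps}|z|^{q-1}$; summing gives the second inequality. There is no real obstacle here — the only thing one has to check is that the $o(|z|)$ assumption is quantified uniformly, which is precisely the content of (F2) and (G2) (continuity of $f$ and $g$ together with $f(0)=g(0)=0$, implicit in the $o(|z|)$ condition, ensures that the cutoff argument is legitimate).
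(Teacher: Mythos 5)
Your proof is correct and is exactly the standard splitting argument the paper has in mind: the paper itself gives no proof, merely remarking that the estimates are ``classical to check,'' and your decomposition into $|z|\leq\delta$ (handled by the $o(|z|)$ condition from (F2)/(G2)) and $|z|\geq\delta$ (handled by absorbing the constant into $|z|^{p-1}$ via (F1)/(G1)) is precisely that classical argument.
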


\begin{Rem}
Note that, by a simple integration, we obtain the following estimates:
\begin{equation}
	\label{F:growth}
	F(z) \leq \eps|z|^2 + C_{F,\eps}|z|^p
\end{equation}
and
\begin{equation}
	\label{G:growth}
	G(z) \leq \eps|z|^2 + C_{G,\eps}|z|^q.
\end{equation}
\end{Rem}
\noindent Repeating the proof of \cite[Lemma 3.2]{BB} we note the following property. 

\begin{Lem}
Suppose that $(F3)$ holds. For every $\eps>0$ there exists $C_{\eps}>0$ such that
\begin{equation}
	\label{F:bound:below}
		F(z) \geq C_{\eps}|z|^{q} - \eps|z|^2,
\end{equation}
for every $z \in \R^{2N}$, where $2 < q < +\infty$ is the exponent given in $(F3)$. Moreover, we can assume that
\begin{equation}\label{C:ineq}
    C_\varepsilon \leq C_{G,\varepsilon}.
\end{equation}
\end{Lem}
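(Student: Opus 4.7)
My plan is to split $\R^{2N}$ into two regions based on the magnitude of $z$ and exploit the two halves of (F3) separately; the argument is essentially that of \cite[Lemma 3.2]{BB}. Fix $\eps > 0$. By the growth part of (F3), for any constant $K > 0$ there exists $R = R(K) > 0$ such that $F(z) \geq K |z|^q$ whenever $|z| \geq R$. On this outer region, as long as the constant $C_\eps$ is taken with $C_\eps \leq K$, I immediately get $F(z) \geq K|z|^q \geq C_\eps |z|^q \geq C_\eps |z|^q - \eps |z|^2$, since $\eps|z|^2 \geq 0$.

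For the inner region $\{|z| \leq R\}$ I would use only the nonnegativity part of (F3), i.e.\ $F(z) \geq 0$. The desired estimate then reduces to arranging $C_\eps |z|^q \leq \eps |z|^2$, so that the right-hand side of \eqref{F:bound:below} is nonpositive and is trivially majorized by $F(z)$. Because $q > 2$ and $|z| \leq R$, the bound $C_\eps |z|^{q-2} \leq \eps$ is ensured by the single numerical condition $C_\eps R^{q-2} \leq \eps$. Setting $C_\eps := \min\{K,\, \eps R^{2-q}\} > 0$ therefore yields one positive constant that works on both regions, and \eqref{F:bound:below} follows globally.

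For the final clause $C_\eps \leq C_{G,\eps}$, the key observation is that \eqref{F:bound:below} is preserved when $C_\eps$ is replaced by any smaller positive number, because $|z|^q \geq 0$. Hence I simply replace the constant constructed above by $\min\{C_\eps,\, C_{G,\eps}\}$. I do not foresee a real obstacle: this is a standard ``superquadratic at infinity plus nonnegativity near the origin'' interpolation, and the only point that requires a bit of bookkeeping is coupling the choice of $R$, which comes from the superquadratic estimate, to the subsequent choice of $C_\eps$.
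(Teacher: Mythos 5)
Your proof is correct and follows the same standard interpolation argument that the paper attributes to \cite[Lemma 3.2]{BB}: on $\{|z| \geq R\}$ one exploits the superquadratic growth $F(z)/|z|^q \to +\infty$, on $\{|z| \leq R\}$ one uses only $F(z) \geq 0$ and makes the right-hand side nonpositive by choosing $C_\eps \leq \eps R^{2-q}$, and the monotonicity of \eqref{F:bound:below} in $C_\eps$ lets one shrink the constant to enforce $C_\eps \leq C_{G,\eps}$.
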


The following lemma states that under our hypotheses, both functions $f$ and $g$ satisfy Ambrosetti-Rabinowitz-type conditions.
\begin{Lem}
Suppose that $(F3)$ and $(F4)$ hold. Then
\begin{equation}
	\label{AR:f}
		0 \leq qF(z) \leq f(z)\cdot z. 
\end{equation}
Suppose that $(G3)$ holds. Then
\begin{equation}
	\label{AR:g}
	0 \leq g(z)\cdot z \leq qG(z). 
\end{equation}
\end{Lem}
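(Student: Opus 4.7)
The plan is to convert the two monotonicity conditions (F4) and (G3) into Ambrosetti--Rabinowitz-type estimates by writing $F$ and $G$ as radial integrals of their gradients. Since $F(0)=G(0)=0$ and $f=\nabla F$, $g=\nabla G$, the fundamental theorem of calculus applied along the segment from $0$ to $z$ gives
\[
	F(z) = \int_0^1 f(tz)\cdot z\,dt, \qquad G(z)=\int_0^1 g(tz)\cdot z\,dt.
\]
The idea is then to rewrite $f(tz)\cdot z = t^{q-1}\cdot \frac{f(tz)\cdot z}{t^{q-1}}$ (and similarly for $g$), and use the monotonicity of the factor $\zeta\mapsto\frac{f(\zeta z)\cdot z}{\zeta^{q-1}}$ (respectively $\zeta\mapsto\frac{g(\zeta z)\cdot z}{\zeta^{q-1}}$) on $(0,+\infty)$ to compare with its value at $\zeta=1$.

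Concretely, for \eqref{AR:f}: fix $z\neq 0$; for every $t\in(0,1]$, assumption (F4) yields $\frac{f(tz)\cdot z}{t^{q-1}}\leq f(z)\cdot z$, hence $f(tz)\cdot z\leq t^{q-1}\,f(z)\cdot z$. Integrating over $t\in[0,1]$ gives
\[
	F(z) \leq f(z)\cdot z\int_0^1 t^{q-1}\,dt = \frac{1}{q}\,f(z)\cdot z,
\]
i.e.\ $qF(z)\leq f(z)\cdot z$. Nonnegativity $F(z)\geq 0$ is exactly part of (F3), and then $f(z)\cdot z\geq qF(z)\geq 0$ follows; the case $z=0$ is trivial because $f$ is odd, so $f(0)=0$. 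For \eqref{AR:g} the argument is the mirror image: (G3) asserts that the same quotient for $g$ is \emph{nonincreasing}, so for $t\in(0,1]$ one has $g(tz)\cdot z\geq t^{q-1}\,g(z)\cdot z$, and integration yields $G(z)\geq\frac{1}{q}\,g(z)\cdot z$. The remaining inequality $g(z)\cdot z\geq 0$ is built directly into (G3).

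There is essentially no obstacle here — the argument is a one-line computation once the gradient representation is written down. The only subtlety worth double-checking is the integrability near $t=0$, but since $f(z)=o(|z|)$ by (F2) (and analogously (G2)), the integrand $f(tz)\cdot z$ is continuous on $[0,1]$ for each fixed $z$, so splitting the inequality against $t^{q-1}$ (with $q>2$) causes no issue. Both \eqref{AR:f} and \eqref{AR:g} then follow in parallel.
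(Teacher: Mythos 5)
Your argument is exactly the paper's: write $F(z)=\int_0^1 f(tz)\cdot z\,dt$, factor out $t^{q-1}$, invoke the monotonicity in (F4) (respectively (G3)) to compare with the quotient at $\zeta=1$, and integrate; nonnegativity comes from (F3) and the sign condition in (G3). The proposal is correct and takes essentially the same approach as the paper.
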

\begin{proof}
	By (F4), we get that
 $$
 q F(z) = q \int_0^1 f(sz) \cdot z \, ds = q \int_0^1 \frac{f(sz) \cdot z}{s^{q-1}} s^{q-1} \, ds \leq q f(z) \cdot z \int_0^1 s^{q-1} \, dt = f(z) \cdot z 
 $$
and taking (F3) into account, we obtain \eqref{AR:f}. A similar argument leads to \eqref{AR:g}.
\end{proof}

Rewrite the energy functional \eqref{energy:functional} in the form
$$
	\cJ(z) 
	 = \frac12\|z^+\|^2-\frac12\|z^-\|^2 - \int_{\R} \Ga(t) F(z) \, dt + \lambda \int_{\R} \Ga(t) G(z) \, dt.
$$
It is classical to check assumptions (A1) and (A2) of Theorem \ref{th:abstrExistence}, and it is enough to verify (A3) for sufficiently small $\lambda > 0$. We divide the proof into three steps.

\bigskip

\textbf{Step 1.} \textit{There is $r > 0$ such that $\inf_{\cS_r^+} \cJ > 0$.}\\
Fix $z^+ \in X^+$. Using \eqref{F:growth} and the continuous Sobolev embedding $H^{\frac12}(\R,\R^{2N}) \hookrightarrow L^p(\R,\R^{2N})$, we have
\begin{align*}
	\cJ(z^+) 
	&\geq \frac12\|z^+\|^2 - \int_\R \Ga(t) F(z^+) \, dt \geq \frac12\|z^+\|^2 - \eps \|\Gamma\|_\infty \|z^+\|_2^2 - C_{F,\eps} \|\Gamma\|_\infty \|z^+\|^p_p \\
	&\geq \frac12\|z^+\|^2 - \eps C \|z^+\|^2 - \widetilde{C}_\varepsilon \|z^+\|^p =\left(\frac12-\eps  C\right)\|z^+\|^2 - \widetilde{C}_\varepsilon \|z^+\|^p \\
	&= \|z^+\|^2\left(\frac12 - \eps C -\widetilde{C}_\varepsilon \|z^+\|^{p-2}\right) = \|z^+\|^2\left(\frac12 - \eps C- \widetilde{C}_\varepsilon r^{p-2}\right),
\end{align*}
where we considered $\eps < \frac{1}{2 C}$ and $\|z^+\| = r$. Now, choosing $r>0$ such that 
\[
    r < \left[\left(\frac12 -\eps C\right)\widetilde{C}_\varepsilon^{-1} \right]^{\frac{1}{p-2}} 
\]    
we get
\[
	\inf_{\|z^+\| = r} \cJ(z^+) > 0.
\]

\textbf{Step 2.} \textit{For $z \in \cP$, there is radius $R(z) > r$ such that $\sup_{\partial M(z)} \cJ \leq 0$.}\\
Fix $z \in \cP = X^+ \setminus \{0\} \subset X \setminus X^-$ and let $z_n \in \R^+z \oplus X^-$, namely, $z_n=t_nz + z_n^-$ for some $t_n \geq 0$, $z_n^- \in X^-$, and we can assume $\|z\|=1$. Using \eqref{G:growth}, \eqref{F:bound:below} and \eqref{mu0:inequality}, we get
\begin{align*}
	\cJ(z_n)
	&=\cJ(t_nz+z_n^-) \\
	&=\frac12t_n^2-\frac12\|z_n^-\|^2 - \int_{\R} \Ga(t) F(z_n) \, dt + \lambda \int_{\R} \Ga(t) G(z_n) \, dt \\
	&\leq \frac12t_n^2-\frac12\|z_n^-\|^2 + \eps ( \Ga_0 +\lambda \|\Ga\|_\infty)\int_\R |z_n|^2 \, dt - C_{\eps}\Ga_0 \int_\R |z_n|^q \, dt + \lambda C_{G,\eps} \|\Ga\|_\infty \int_\R |z_n|^q \, dt \\
	& \leq \frac12t_n^2-\frac12\|z_n^-\|^2 + \eps ( \Ga_0 +\lambda \|\Ga\|_\infty )\frac{1}{\mu_0^2}\|z_n\|^2  - C_\eps\Gamma_0 \|z_n\|^q_q \\
    &\qquad + \lambda2^{q-1}C_{G,\eps}\|\Gamma\|_\infty\left(t_n^q \|z\|_q^q + \|z_n^-\|_q^q  \right) \\
	& \leq \left(\frac12 +\frac{\eps(\Gamma_0+\lambda\|\Gamma\|_\infty)}{\mu_0^2}\right)t_n^2 + \left(-\frac12 +\frac{\eps(\Gamma_0+\lambda\|\Gamma\|_\infty)}{\mu_0^2}\right)\|z_n^-\|^2 \\
    &\qquad - \frac{C_\eps\Gamma_0}{2\kappa^q}\left( t_n^q \|z\|^q_q + \|z_n^-\|^q_q\right) + \lambda2^{q-1}C_{G,\eps}\|\Gamma\|_\infty\left(t_n^q \|z\|_q^q + \|z_n^-\|_q^q  \right),
\end{align*}
where the last inequality follows from \eqref{projection:inequality}: indeed, for $z \in X$, $\|z^\pm\|_q \leq \kappa \|z\|_q$, hence
\[
	\|z^+\|_q^q + \|z^-\|_q^q \leq 2\kappa^q \|z\|_q^q.
\]
Assuming that $\lambda 
        < \frac{\Ga_0}{\|\Ga\|_\infty}\frac{C_\eps}{C_{G,\eps}}\frac{1}{(2\kappa)^q}$ and choosing $\eps=\frac{\mu_0^2}{4(\Ga_0 +\|\Ga\|_\infty)}$, we get
\begin{align*}
	\cJ(z_n)
	&=\cJ(t_nz+z_n^-) \\
	&\leq \left(\frac12 +\frac{\eps(\Gamma_0+\lambda\|\Gamma\|_\infty)}{\mu_0^2}\right)t_n^2 + \left(-\frac12 +\frac{\eps(\Gamma_0+\lambda\|\Gamma\|_\infty)}{\mu_0^2}\right)\|z_n^-\|^2 \\
    & \quad + \left(\lambda2^{q-1}C_{G,\eps}\|\Gamma\|_\infty - \frac{C_\eps\Gamma_0}{2\kappa^q}\right)t_n^q\|z\|^q_q + \left(\lambda2^{q-1}C_{G,\eps}\|\Gamma\|_\infty - \frac{C_\eps\Gamma_0}{2\kappa^q}\right)\|z_n^-\|^q_q \\
    &\leq \frac34 t_n^2 -\frac14 \|z_n^-\|^2 \\
    & \quad + \left(\lambda2^{q-1}C_{G,\eps}\|\Gamma\|_\infty - \frac{C_\eps\Gamma_0}{2\kappa^q}\right)t_n^q\|z\|^q_q + \left(\lambda2^{q-1}C_{G,\eps}\|\Gamma\|_\infty - \frac{C_\eps\Gamma_0}{2\kappa^q}\right)\|z_n^-\|^q_q.
\end{align*}
Eventually, we obtain that
\[
	\cJ(z_n) \to -\infty \quad \text{ as } \|t_nz+z_n^-\| \to +\infty.
\]
In particular, if $t_n=0$  then $\cJ(z_n^-) \leq 0$, namely, $\sup_{X^-} \cJ \leq 0$. Therefore, for a sufficiently large $R(z)$, we get $\sup_{\partial M(z)}\cJ(z) \leq 0$.	

\begin{Rem}
    Observe that, using \eqref{C:ineq},
    \[
        \lambda2^{q-1}C_{G,\eps}\|\Gamma\|_\infty - \frac{C_\eps\Gamma_0}{2\kappa^q} < 0 
    \]
    if and only if 
    \[
        \lambda 
        < \frac{\Ga_0}{\|\Ga\|_\infty}\frac{C_\eps}{C_{G,\eps}}\frac{1}{(2\kappa)^q}.
    \]
    Then also $\lambda \leq \frac{\Ga_0}{\|\Ga\|_\infty}\frac{1}{(2\kappa)^q} 
        \leq 
        \frac{\Ga_0}{\|\Ga\|_\infty} \leq 1.$
\end{Rem}
{Step 3.} \textit{There is $\delta > 0$ such that $\sup_{\triple{z} \leq \delta} \cJ(z) < \inf_{\cS_r^+} \cJ$.} \\
Let $z \in X$. By \eqref{G:growth}, \eqref{F:bound:below}, \eqref{C:ineq} and \eqref{mu0:inequality}
\begin{align*}
	\cJ(z)
	&=\frac12\|z^+\|^2-\frac12\|z^-\|^2 - \int_{\R} \Gamma(t)F(z) \, dt + \lambda \int_{\R} \Gamma(t)G(z) \, dt \\
	&\leq \frac12\|z^+\|^2-\frac12\|z^-\|^2 - \Gamma_0C_\eps\|z\|^q_q + \Gamma_0\eps\|z\|^2_2 + \lambda\|\Gamma\|_\infty\eps\|z\|^2_2 + \lambda \|\Gamma\|_\infty C_{G,\eps}\|z\|^q_q\\
    &\leq \frac12\|z^+\|^2-\frac12\|z^-\|^2 + \underbrace{\frac{\eps}{\mu_0^2}(\Gamma_0+\lambda\|\Gamma\|_\infty)}_{=: D = D(\varepsilon, \lambda)}\|z\|^2 - \Gamma_0 C_\eps\|z\|^q_q + \lambda\|\Gamma\|_\infty C_{G,\eps}\|z\|^q_q\\
	&\leq \frac12\|z^+\|^2-\frac12\|z^-\|^2 + D \left( \|z^+\|^2 + \|z^-\|^2 \right) -\Gamma_0 C_\eps\|z\|^q_q + \lambda\|\Gamma\|_\infty C_{G,\eps}\|z\|^q_q\\
	&\leq \left(\frac12 + D\right)\|z^+\|^2 - \left(\frac12 - D\right)\|z^-\|^2 + \left(\lambda\|\Gamma\|_\infty C_{G,\eps} - \Gamma_0 C_\eps\right)\|z\|^q_q \\
	&\leq \left(\frac12 + D \right)\|z^+\|^2 \leq \left(\frac12 + D \right) \triple{z} \to 0 \text{ as } \triple{z} \to 0,
\end{align*}
where we chose $\lambda$ and $\eps$ as in Step 2 (in particular, $\frac12-D > 0$).

\section{Boundedness of Cerami-type sequences}\label{sect:5}

Now, we are going to discuss the boundedness of a Cerami sequence for $\cJ$, whose existence was established in the previous Section.

\begin{Rem}\label{Rem:monotonicity}
Observe that (F4), (G3) and the oddness of $f,g$ imply the following property:
\[
	\left| \frac{g(z) \cdot z}{f(z) \cdot z} \right| \leq \sup_{|w|= \rho} \frac{g(w) \cdot w}{f(w) \cdot w}
\]
for $|z| \geq \rho$. (F5) ensures that $\{ |z| \geq \rho \} \ni z \mapsto \frac{g(z) \cdot z}{f(z) \cdot z} $ is well-defined.
\end{Rem}

\begin{Lem}\label{lem:cerami_bdd}
Suppose that $\lambda > 0$ and $\rho > 0$ in (F5) are sufficiently small. Let $(z_n) \subset X$ satisfies
\[
	\cJ(z_n) \leq \beta, \quad (1+\|z_n\|) \cJ'(z_n) \to 0
\]
for some $\beta \in \R$. Then $(z_n)$ is bounded in $X$. In particular, any Cerami sequence for $\cJ$ is bounded.
\end{Lem}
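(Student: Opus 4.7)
I would argue by contradiction. Assume $\|z_n\|\to\infty$ and use the Cerami condition, which implies $\cJ'(z_n)(z_n)=o(1)$ and, testing $\cJ'(z_n)$ with $v=z_n^+ - z_n^-$, yields the fundamental identity
\[
\|z_n\|^2 = \int_\R \Gamma(t)\bigl[f(z_n)-\lambda g(z_n)\bigr]\cdot(z_n^+ - z_n^-)\,dt + o(1).
\]
The plan is to bound the right-hand side by $C + \eta\|z_n\|^2$ with $\eta<1$ for $\lambda,\rho$ small, producing a contradiction.

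The first step is an a priori $L^p$-estimate. Combining $\cJ(z_n)\leq\beta$ with $\cJ'(z_n)(z_n)=o(1)$ and the Ambrosetti--Rabinowitz inequalities \eqref{AR:f}--\eqref{AR:g} gives
\[
\bigl(\tfrac{1}{2}-\tfrac{1}{q}\bigr)\int_\R \Gamma\bigl[f(z_n)\cdot z_n - \lambda g(z_n)\cdot z_n\bigr]\,dt \leq \beta + o(1).
\]
I would split the $g$-integral over $\{|z_n|\geq\rho\}$ and $\{|z_n|<\rho\}$: on the first set, Remark~\ref{Rem:monotonicity} together with (FG) at $|z|=\rho$ yields $g(z_n)\cdot z_n\lesssim \rho^{q-p}f(z_n)\cdot z_n$; on the second, (F2) and (G2) with $\rho$ small give $|g(z_n)\cdot z_n|\leq\varepsilon|z_n|^2$. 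Absorbing the $g$-term (provided $\lambda\rho^{q-p}$ is small) produces
\[
\int_\R \Gamma f(z_n)\cdot z_n\,dt \leq C_1 + C_2\lambda\varepsilon\|z_n\|^2.
\]
By (F5) on $\{|z_n|\geq\rho\}$ together with the trivial $|z|^p\leq \rho^{p-2}|z|^2$ on $\{|z_n|<\rho\}$, this converts to
\[
\|z_n\|_p^p \leq C_3 + \delta\|z_n\|^2,
\]
where $\delta$ can be made arbitrarily small by further shrinking $\lambda$ and $\rho$.

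To close the argument, I would estimate the fundamental identity by splitting at $|z_n|=\rho$. On $\{|z_n|\geq\rho\}$, (F1) gives $|f(z_n)|\lesssim|z_n|^{p-1}$ and (FG) gives $|g(z_n)\cdot w|\leq C\rho^{q-p}|f(z_n)||w|$, so H\"older's inequality with exponents $p/(p-1)$ and $p$, the projection bound \eqref{projection:inequality}, and the $L^p$-bound above yield
\[
\int_{|z_n|\geq\rho}\Gamma\bigl|[f(z_n)-\lambda g(z_n)]\cdot(z_n^+ - z_n^-)\bigr|\,dt \lesssim (1+\lambda\rho^{q-p})\|z_n\|_p^p.
\]
On $\{|z_n|<\rho\}$, (F2), (G2) and the $L^2$-orthogonality $\|z_n^+-z_n^-\|_2=\|z_n\|_2$ (a consequence of the spectral decomposition of the self-adjoint operator $\mathbf{A}$) give a further $O(\varepsilon\|z_n\|^2)$ contribution. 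Tuning $\lambda,\rho,\varepsilon$ so that the combined $\|z_n\|^2$-coefficient falls below $1/2$ and inserting into the fundamental identity leads to $\tfrac{1}{2}\|z_n\|^2\leq C+o(1)$, contradicting $\|z_n\|\to\infty$.

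The main obstacle is that the sign-changing term $-\lambda G$ is a priori of the same order as the superquadratic $F$, and the strong indefiniteness obstructs any one-sided energy bound. Condition (FG) is tailored precisely for this: it provides the pointwise comparison $|g(z)\cdot w|\lesssim|f(z)\cdot w||z|^{q-p}$ needed to absorb $g$-terms into $f$-terms in both the a priori estimate and the final H\"older bound, while (F5) upgrades the AR-type bound on $\int\Gamma f(z_n)\cdot z_n$ to an $L^p$-bound on $z_n$ over the large region. The joint smallness $\lambda\rho^{q-p}\ll 1$ is exactly what makes the absorbing work.
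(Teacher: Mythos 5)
Your proposal is correct and follows essentially the same route as the paper's proof: the fundamental identity $\|z_n\|^2 = \int_\R \Gamma(f(z_n)-\lambda g(z_n))\cdot(z_n^+-z_n^-)\,dt + o(1)$, the split of integrals at $|z_n|=\rho$, the combination $\cJ(z_n)-\tfrac12\cJ'(z_n)(z_n)$ together with \eqref{AR:f}--\eqref{AR:g} and Remark~\ref{Rem:monotonicity} to derive an $L^p$-bound, the use of (FG) to compare $g$- with $f$-terms on $\{|z_n|\geq\rho\}$, and absorption under joint smallness of $\lambda$ and $\rho$. The only differences are cosmetic: you frame it as a contradiction with $\|z_n\|\to\infty$ while the paper argues directly, and you state the $L^p$-bound $\|z_n\|_p^p\leq C+\delta\|z_n\|^2$ as an intermediate lemma whereas the paper threads the same estimate inline through the quantity $\Phi$. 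Both handle the small-set region slightly differently (you invoke (F2)/(G2) with $\rho$ small after $\varepsilon$; the paper uses the two-term growth bounds \eqref{f:growth}, \eqref{g:growth}), but the order of choice --- first $\varepsilon$, then $\rho$, then $\lambda$ --- is the same.
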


\begin{proof}
Take $(z_n) \subset X$ as in the statement. Note that
\[
	\|z_n\|^2 = \|z_n^+\|^2 + \|z_n^-\|^2 = \int_{\R} \Ga(t) (f(z_n) - \lambda g(z_n)) \cdot (z_n^+ - z_n^-) \, dt + o(1),
\]
and
\begin{align*}
	&\quad \int_{\R} \Ga(t) (f(z_n) - \lambda g(z_n)) \cdot (z_n^+ - z_n^-) \, dt\\
	&= \underbrace{\int_{|z_n| < \rho} \Ga(t) (f(z_n) - \lambda g(z_n)) \cdot (z_n^+ - z_n^-) \, dt}_{=:I_1} + \underbrace{\int_{|z_n| \geq \rho} \Ga(t) (f(z_n) - \lambda g(z_n)) \cdot (z_n^+ - z_n^-) \, dt}_{=:I_2}.
\end{align*}
To estimate $I_1$ we fix $\eps > 0$, and we obtain, using $(\Gamma)$, \eqref{f:growth}, and \eqref{g:growth},
\begin{align*}
	I_1 
	&\leq \|\Ga\|_\infty \int_{|z_n| < \rho} |f(z_n)-\lambda g(z_n)| \left| z_n^+ - z_n^- \right| \, dt  \\
	&\leq \eps (1+\lambda) \|\Ga\|_\infty \int_{|z_n| < \rho} |z_n| \left| z_n^+ - z_n^- \right| \, dt + C_{f,\eps} \|\Ga\|_\infty \int_{|z_n|<\rho} |z_n|^{p-1} \left| z_n^+ - z_n^- \right| \, dt \\
	&\quad  + C_{g,\eps} \lambda \|\Ga\|_\infty \int_{|z_n|<\rho} |z_n|^{q-1} \left| z_n^+ - z_n^- \right| \, dt \\
	&\leq \left( \eps(1+\lambda) + C_{f,\eps} \rho^{p-2} + \lambda C_{g,\eps} \rho^{q-2} \right)  \|\Ga\|_\infty \int_{|z_n| < \rho} |z_n| \left| z_n^+ - z_n^- \right| \, dt \\
    &\leq \left( \eps(1+\lambda) + C_{f,\eps} \rho^{p-2} + \lambda C_{g,\eps} \rho^{q-2} \right) \|\Ga\|_\infty \left(\|z_n^+\|^2_2 + \|z_n^-\|^2_2\right) \\
    &\leq \frac{2\kappa}{\mu_0^2}\left( \eps(1+\lambda) + C_{f,\eps} \rho^{p-2} + \lambda C_{g,\eps} \rho^{q-2} \right) \|\Ga\|_\infty \|z_n\|^2,
\end{align*}
where we used \eqref{projection:inequality} and \eqref{mu0:inequality} in the last inequality.

To estimate $I_2$, we note the inequality, valid for $z = z^+ + z^- \in X$, at points where $|z| \geq \rho$, thanks to (FG)
\begin{align}
	\label{I2-1} | f(z) \cdot (z^+ - z^-) | \gtrsim |g(z)\cdot(z^+ - z^-)| |z|^{p-q},
\end{align}
Hence,
\begin{align*}
	I_2 
	&= \int_{|z_n| \geq \rho} \Ga(t) f(z_n) \cdot (z_n^+ - z_n^-) \left( 1 - \lambda \frac{g(z_n) \cdot (z_n^+ - z_n^-)}{f(z_n) \cdot (z_n^+ - z_n^-)} \right) \, dt \\
	&\leq \|\Ga\|_\infty \int_{|z_n| \geq \rho} |f(z_n)| |z_n^+ - z_n^-|  \left( 1 + \lambda \frac{|g(z_n) \cdot (z_n^+ - z_n^-)|}{|f(z_n) \cdot (z_n^+ - z_n^-)|} \right)  \, dt.
\end{align*}
Now, using (F1), (F5), \eqref{I2-1}, H\"older inequality, and \eqref{projection:inequality}, we get
\begin{align*}
	I_2 \lesssim \|\Ga\|_\infty \int_{|z_n| \geq \rho} |z_n|^{p-1} |z_n^+ - z_n^-|  \left( 1 + \frac{\lambda}{|z_n|^{p-q}} \right)  \, dt \leq 2\kappa\|\Ga\|_\infty\left( 1 + \frac{\lambda}{\rho^{p-q}} \right) \int_{\R} |z_n|^p  \, dt.
\end{align*}
To estimate the $L^p$-norm of $z_n$ we observe the following
\begin{align*}
	\beta + o(1) &\geq \cJ(z_n) - \frac12 \cJ'(z_n)(z_n) = \int_{\R} \Ga(t) \Phi(z_n) \, dt,
\end{align*}
where we set
$$
    \Phi(z) := \frac12 f(z)\cdot z - F(z) + \lambda G(z) - \frac{\lambda}{2} g(z) \cdot z
$$
for a simplicity of notation. Using $(\Gamma)$, (F5), \eqref{AR:f}, \eqref{AR:g}, Remark \ref{Rem:monotonicity} and choosing $\lambda$ so small that $1-\lambda  \sup_{|w|= \rho} \frac{g(w) \cdot w}{f(w) \cdot w} > 0$, we get
\begin{align*}
    &\beta + o(1) + \|\Ga\|_\infty \int_{|z_n| < \rho} |\Phi(z_n)| \, dt \geq \beta + o(1) - \int_{|z_n| < \rho} \Ga(t) \Phi(z_n) \, dt \\
    &\qquad= \beta + o(1) - \int_{\R} \Ga(t) \Phi(z_n) \, dt + \int_{|z_n| \geq \rho} \Ga(t) \Phi(z_n) \, dt \geq \int_{|z_n| \geq \rho} \Ga(t) \Phi(z_n) \, dt \\
    &\qquad= \int_{|z_n| \geq \rho} \Ga(t) \left[ \frac12 f(z_n)\cdot z_n - F(z_n) + \lambda G(z_n) - \frac{\lambda}{2} g(z_n)\cdot z_n \right] \, dt \\
    &\qquad\geq \left( \frac12 - \frac1q \right) \int_{|z_n| \geq \rho} \Ga(t) \left( f(z_n) \cdot z_n - \lambda g(z_n) \cdot z_n \right) \, dt \\
    &\qquad= \left( \frac12 - \frac1q \right) \int_{|z_n| \geq \rho} \Ga(t) \left(1 - \lambda \frac{g(z_n) \cdot z_n}{f(z_n) \cdot z_n} \right) f(z_n) \cdot  z_n \, dt \\
    &\qquad\geq \left( \frac12 - \frac1q \right) \left(1 - \lambda \sup_{|w|= \rho} \frac{g(w) \cdot w}{f(w) \cdot w} \right) \int_{|z_n| \geq \rho} \Ga(t)  f(z_n) \cdot z_n \, dt \\
    &\qquad\gtrsim \underbrace{\left(1 - \lambda  \sup_{|w|= \rho} \frac{g(w) \cdot w}{f(w) \cdot w} \right)}_{=:E(\lambda,\rho)} \int_{|z_n| \geq \rho} |z_n|^p \, dt.
\end{align*}
Thus
\begin{equation}\label{1}
    \int_{|z_n| \geq \rho} |z_n|^p \, dt \leq C \left[ E(\lambda,\rho)
    \right]^{-1} 
    \left( \beta + \int_{|z_n| < \rho} \Gamma(t)|\Phi(z_n)| \, dt \right) + o(1)
\end{equation}
for some constant $C > 0$ independent of $n$, $\lambda$ and $\rho$.\\ 
Moreover,
\begin{equation}
    \label{2}
    \int_{|z_n| < \rho}\Ga(t)|\Phi(z_n)| \, dt \leq \|\Ga\|_\infty\int_{|z_n| < \rho}\frac{|\Phi(z_n)|}{|z_n|^2}|z_n|^2 \, dt \leq \|\Ga\|_\infty\sup_{|v|<\rho}\frac{|\Phi(v)|}{|v|^2}\int_{\R}|z_n|^2 \, dt.
\end{equation}
Therefore, using \eqref{mu0:inequality}, \eqref{1}  and, \eqref{2}, we get
{\allowdisplaybreaks
\begin{align*}
    I_2 
    &\lesssim \underbrace{\left( 1 + \frac{\lambda}{\rho^{p-q}} \right) 2\kappa\|\Ga\|_{\infty}}_{=:D=D(\lambda, \rho)}  \left( \int_{|z_n| < \rho} |z_n|^p  \, dt + \int_{|z_n| \geq \rho} |z_n|^p  \, dt \right) \\
    &\leq D\left( \int_{|z_n| < \rho} |z_n|^p  \, dt + C\left[E(\lambda,\rho)
    \right]^{-1} \left( \beta + \int_{|z_n| < \rho} \Gamma(t)|\Phi(z_n)| \, dt \right) \right) + o(1) \\
    &\leq D \left( \rho^{p-2} \|z_n\|_2^2 + C\left[E(\lambda,\rho)
    \right]^{-1} \left( \beta + \int_{|z_n| < \rho} \Gamma(t)|\Phi(z_n)| \, dt \right) \right)+ o(1) \\
    &\leq D \left( \frac{\rho^{p-2}}{\mu_0^2} \|z_n\|^2 + \frac{C}{E(\lambda,\rho)
    } \left( \beta + \|\Ga\|_{\infty}\sup_{|v|<\rho}\frac{\Phi(v)}{|v|^2} \int_{|z_n| < \rho} |z_n|^2 \, dt \right) \right)+ o(1) \\
    &\leq D \left( \frac{\rho^{p-2}}{\mu_0^2} \|z_n\|^2 + \frac{C}{E(\lambda,\rho)
    } \left( \beta + \|\Ga\|_{\infty}\frac{1}{\mu_0^2}\sup_{|v|<\rho}\frac{\Phi(v)}{|v|^2} \|z_n\|^2 \right) \right)+ o(1) \\
    &= D \left( \frac{\rho^{p-2}}{\mu_0^2} + \frac{C\|\Ga\|_{\infty}\sup_{|v|<\rho}\frac{\Phi(v)}{|v|^2} }{\mu_0^2 E(\lambda,\rho)
    } \right) \|z_n\|^2 + \underbrace{\frac{CD\beta}{E(\lambda,\rho)
    }}_{=:\widetilde{D}=\widetilde{D}(\lambda,\rho)} + o(1) \\
    &\leq D \left( \frac{\rho^{p-2}}{\mu_0^2} + \frac{C \|\Ga\|_{\infty}\sup_{|v|<\rho}\frac{\Phi(v)}{|v|^2} }{\mu_0^2 E(\lambda,\rho)
    } \right) \|z_n\|^2 + \widetilde{D} + o(1). \\
\end{align*}}
Finally,
\begin{align*}
    \|z_n\|^2 &= I_1 + I_2 + o(1)\\
    & \leq \gamma \left[ \frac{2\kappa}{\mu_0^2}\left( \eps(1+\lambda) + C_{f,\eps} \rho^{p-2} + \lambda C_{g,\eps} \rho^{q-2} \right) \|\Ga\|_\infty \|z_n\|^2 \right. \\
    & \left. \quad +\frac{1}{\mu_0^2} D(\lambda,\rho) \left( \rho^{p-2} + \frac{C \|\Ga\|_{\infty}\sup_{|v|<\rho}\frac{\Phi(v)}{|v|^2} }{E(\lambda,\rho)
    } \right) \|z_n\|^2 + \widetilde{D} \right] + o(1) \\
    & = \gamma \frac{2\kappa \|\Ga\|_\infty }{\mu_0^2}\Lambda \|z_n\|^2 + \gamma \widetilde{D} + o(1),
\end{align*}
for some $\gamma > 0$, where 
\begin{align*}
    \Lambda :&=  \eps(1+\lambda) + C_{f,\eps} \rho^{p-2} + \lambda C_{g,\eps} \rho^{q-2} \\
    & \quad + \left( 1 + \frac{\lambda}{\rho^{p-q}} \right) \left( \rho^{p-2} + \frac{C \|\Ga\|_{\infty}\sup_{|v|<\rho}\frac{\Phi(v)}{|v|^2} }{E(\lambda,\rho)
    } \right)
\end{align*}
Hence, the proof is completed if $\Lambda < \frac{ \mu_0^2}{2 \gamma \kappa  \|\Ga\|_\infty } =: \Lambda_0$ for sufficiently small $\rho > 0$ and $\lambda > 0$.

Remembering that, in general, since $\lambda \leq 1$, we have the following estimate
\begin{align*}
    \Lambda \leq 2\varepsilon + (C_{f,\eps} + 1) \rho^{p-2} + (C_{g,\eps} + 1) \rho^{q-2} + \left( 1 + \frac{\lambda}{\rho^{p-q}} \right) \frac{C \|\Ga\|_{\infty}\sup_{|v|<\rho}\frac{|\Phi(v)|}{|v|^2} }{E(\lambda,\rho)}.
\end{align*}
Take $\eps = \frac{\Lambda_0}{16}$. Choose $\rho > 0$ so small that
\begin{align*}
&2 C \|\Ga\|_\infty \sup_{|v|<\rho}\frac{|\Phi(v)|}{|v|^2} \leq \frac{\Lambda_0}{16}, \\
&(C_{f,\eps} + 1) \rho^{p-2} \leq \frac{\Lambda_0}{8}, \quad (C_{g,\eps} + 1) \rho^{q-2} \leq \frac{\Lambda_0}{8}.
\end{align*}
Note that it is possible, because $\lim_{|v| \to 0} \frac{|\Phi(v)|}{|v|^2} = 0$. Then, for such $\rho$ choose $\lambda$ so small that
$$
E(\lambda, \rho) = 1 - \lambda\sup_{|w|= \rho} \frac{g(w) \cdot w}{f(w) \cdot w} \geq \frac12, \quad \frac{\lambda}{\rho^{p-q}} \leq 1.
$$
Then observe that
\begin{align*}
    \Lambda &\leq \frac{3 \Lambda_0}{8} + 2 \frac{C \|\Ga\|_{\infty}\sup_{|v|<\rho}\frac{|\Phi(v)|}{|v|^2} }{E(\lambda,\rho)} \leq \frac{\Lambda_0}{2} < \Lambda_0
\end{align*}
and the proof is completed.
\end{proof}

\section{Existence of a nontrivial homoclinic solution}\label{sect:6}
In this Section, we are going to show the existence of a nontrivial solution to \eqref{eq:hamiltonian}. The main tool will be the abstract theorem \ref{th:abstrExistence}, but we still need a couple of preliminary results: in particular, two concentration-compactness results in the spirit of Lions \cite{Lions1, Lions2} are required, which will allow us to find a nontrivial limit of a minimizing sequence (up to translations), cf. \cite[Lemma 6.1]{BB} and \cite[Proposition 6.2]{BB}.
\begin{Lem}
\label{Lions:vanishing:Lemma}
    Suppose a sequence $(z_n)_n \subset X$ is bounded and for some $R>0$ the following condition
    \begin{equation}
        \label{vanishing:condition}
            \lim_{n \to +\infty} \sup_{y \in \R} \int_{y-R}^{y+R} |z_n|^2 \, dt = 0
    \end{equation}
    holds. Then
    \[
        \lim_{n \to +\infty} \int_{\R} |\Psi(z_n)| \, dt = 0,
    \]
    for any continuous function $\Psi:\R \to \R$ such that
    \[
        \lim_{s \to 0} \frac{\Psi(s)}{s^2} = 0, \quad |\Psi(s)| \lesssim 1 + |s|^r \ \mbox{for all} \ s \in \R,
    \]
    for some $r > 2$.
\end{Lem}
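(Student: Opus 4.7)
The plan is to reduce the statement to the classical Lions-type vanishing lemma in $H^{1/2}(\R)$ and then split the integral of $|\Psi(z_n)|$ according to whether $|z_n|$ is small or large, exploiting both the quadratic vanishing of $\Psi$ at $0$ and its $r$-power growth at infinity.

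First I would use the hypotheses on $\Psi$ to derive a uniform pointwise bound. Fix $\eps>0$. Since $\Psi(s)/s^2 \to 0$ as $s \to 0$, there exists $\delta = \delta(\eps) > 0$ such that $|\Psi(s)| \leq \eps s^2$ whenever $|s| \leq \delta$. On the complementary region, continuity of $\Psi$ together with $|\Psi(s)| \lesssim 1 + |s|^r$ yields $|\Psi(s)| \leq C_\delta |s|^r$ for $|s| \geq \delta$ (absorbing the constant term by enlarging the coefficient of $|s|^r$, which is allowed since $|s| \geq \delta$). Combining,
\[
    |\Psi(s)| \leq \eps s^2 + C_\delta |s|^r \quad \text{for every } s \in \R.
\]
Applying this pointwise (understood componentwise or via $|z_n|$ in the vectorial setting) and integrating, we obtain
\[
    \int_{\R} |\Psi(z_n)|\, dt \leq \eps \|z_n\|_2^2 + C_\delta \|z_n\|_r^r.
\]

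The main step is then to show that the vanishing condition \eqref{vanishing:condition}, combined with boundedness of $(z_n)$ in $X = H^{1/2}(\R;\R^{2N})$, implies $z_n \to 0$ in $L^r(\R;\R^{2N})$ for every $r \in (2,\infty)$. This is the classical Lions lemma adapted to the fractional Sobolev setting on $\R$: one interpolates $L^r$ between $L^2$ (controlled uniformly on each interval of length $2R$ by the vanishing hypothesis) and $L^{r'}$ with $r' > r$ (controlled by the $H^{1/2}$ bound via the continuous Sobolev embedding $H^{1/2}(\R) \hookrightarrow L^{r'}(\R)$). Summing the local estimates over a covering of $\R$ by intervals of length $2R$ produces a bound of the form $\|z_n\|_r^r \lesssim \bigl(\sup_{y \in \R} \int_{y-R}^{y+R} |z_n|^2\, dt\bigr)^\theta$ for some $\theta > 0$, which tends to zero. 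I expect this interpolation-and-covering step to be the main technical obstacle, but it is standard (see e.g. the argument in \cite[Lemma 6.1]{BB} in the Schr\"odinger setting, which transfers without modification because the ambient space $H^{1/2}$ is the same).

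Once the $L^r$-convergence is established, we have
\[
    \limsup_{n \to \infty} \int_{\R} |\Psi(z_n)|\, dt \leq \eps \sup_n \|z_n\|_2^2 \leq \eps \cdot M
\]
for some constant $M$ depending only on the uniform $H^{1/2}$-bound of $(z_n)$. Since $\eps > 0$ is arbitrary, letting $\eps \to 0^+$ yields $\int_{\R} |\Psi(z_n)|\, dt \to 0$, which concludes the proof.
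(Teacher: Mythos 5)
Your proof is correct and follows essentially the same route as the paper's: the pointwise decomposition $|\Psi(s)| \leq \eps s^2 + C_\eps |s|^r$, then the fractional Lions lemma to get $\|z_n\|_r \to 0$, then the $L^2$-bound and arbitrariness of $\eps$. The paper simply cites the fractional Lions lemma rather than sketching the interpolation-and-covering argument, but the substance is identical.
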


\begin{proof}
    Fix $\Psi : \R \rightarrow \R$ as in the statement. Then, it follows that for every $\varepsilon > 0$ there is $c_\varepsilon > 0$ such that
    $$
    |\Psi(s)| \leq \varepsilon s^2 + c_\varepsilon |s|^r.
    $$
    From the fractional version of Lions' lemma in $H^{1/2} (\R; \R^{2N})$ there follows that $\|z_n\|_r \to 0.$ Since $(z_n)$ is bounded in $X$, and therefore in $L^2(\R; \R^{2N})$, we conclude from the following inequality
    $$
    \limsup_{n \to \infty} \int_\R |\Psi(z_n)| \, dt \leq \limsup_{n \to \infty} \left( \varepsilon \| z_n \|_2^2 + c_\varepsilon \| z_n \|_r^r \right) \lesssim \varepsilon.
    $$
\end{proof}

The above version of the concentration-compactness principle yields the following fact (cf. \cite[Proposition 6.2]{BB}, \cite[Lemma 3.6]{SZ}).
\begin{Prop}
    \label{Lions:nonlinearity:Prop}
    Suppose that a bounded sequence $(z_n)_n \subset X$ satisfies \eqref{vanishing:condition} for some $R>0$. Then,
    \begin{equation}
        \label{Lions:nonliearity:thesis}
            \lim_{n \to +\infty} \int_{\R} \Ga(t)\left(f(z_n) - \lambda g(z_n)\right) \cdot z_n^{\pm} \, dt = 0.
    \end{equation}
\end{Prop}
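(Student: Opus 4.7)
The plan is to apply Proposition \ref{f,g:growth} to split the contributions of $f$ and $g$ into a (super)quadratic piece near zero and a superquadratic tail, then estimate each piece via H\"older's inequality combined with the projection continuity \eqref{projection:inequality}, and finally invoke Lemma \ref{Lions:vanishing:Lemma} to annihilate the superquadratic tails while absorbing the quadratic piece into an arbitrarily small prefactor.

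First I would use $(\Gamma)$ and a pointwise Cauchy--Schwarz inequality to get
\[
    \left| \int_{\R} \Gamma(t)(f(z_n) - \lambda g(z_n)) \cdot z_n^{\pm} \, dt \right| \leq \|\Gamma\|_\infty \int_\R \bigl(|f(z_n)| + \lambda |g(z_n)|\bigr)|z_n^\pm| \, dt.
\]
Fixing $\varepsilon > 0$ and applying \eqref{f:growth}--\eqref{g:growth}, the right-hand side is bounded by
\[
    \|\Gamma\|_\infty \Bigl( \varepsilon(1+\lambda) \int_\R |z_n|\,|z_n^\pm| \, dt + C_{f,\varepsilon} \int_\R |z_n|^{p-1}|z_n^\pm| \, dt + \lambda C_{g,\varepsilon} \int_\R |z_n|^{q-1}|z_n^\pm| \, dt \Bigr).
\]

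Next, H\"older's inequality together with \eqref{projection:inequality} gives, for $s \in \{p,q\}$,
\[
    \int_\R |z_n|^{s-1}|z_n^\pm| \, dt \leq \|z_n\|_s^{s-1}\|z_n^\pm\|_s \leq \kappa \|z_n\|_s^s,
\]
while the quadratic piece is controlled by $\|z_n\|_2\|z_n^\pm\|_2$, which is uniformly bounded in $n$ since $(z_n)$ is bounded in $X$ and $X \hookrightarrow L^2(\R; \R^{2N})$. I would then invoke Lemma \ref{Lions:vanishing:Lemma} with $\Psi(s) = |s|^p$ (taking $r=p>2$) and $\Psi(s) = |s|^q$ (taking $r=q>2$) to conclude $\|z_n\|_p \to 0$ and $\|z_n\|_q \to 0$, so that the two superquadratic contributions vanish in the limit.

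Collecting the estimates, $\limsup_n |I_n^{\pm}| \leq M\varepsilon$ for a constant $M$ independent of $n$ and $\varepsilon$, where $I_n^{\pm}$ denotes the integral in \eqref{Lions:nonliearity:thesis}; since $\varepsilon$ was arbitrary, the conclusion follows. The only subtlety is that Lemma \ref{Lions:vanishing:Lemma} cannot directly kill the quadratic contribution coming from the linear behavior of $f$ and $g$ near zero---this is precisely what the $\varepsilon$-flexibility in \eqref{f:growth}--\eqref{g:growth} is designed to absorb, leaving an $O(\varepsilon)$ remainder that is then sent to zero.
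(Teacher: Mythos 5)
Your proof is correct, and it is the standard argument that the paper implicitly relies on: the paper does not spell out a proof of this proposition but simply presents it as a consequence of Lemma~\ref{Lions:vanishing:Lemma} (citing \cite{BB} and \cite{SZ}), and your reconstruction is exactly the expected route --- split $f$ and $g$ via the $\varepsilon$-growth bounds \eqref{f:growth}--\eqref{g:growth}, use H\"older plus \eqref{projection:inequality} to reduce the superquadratic pieces to $\|z_n\|_p^p$ and $\|z_n\|_q^q$, kill those with Lemma~\ref{Lions:vanishing:Lemma}, and let the $\varepsilon$-prefactor absorb the residual quadratic piece. Your closing remark correctly flags the one point where a naive application of the Lions lemma alone would fail (the linear part of $f,g$ near zero produces a quadratic contribution that does not vanish), and correctly resolves it by taking $\limsup_n$ for fixed $\varepsilon$ first --- so that the $n$-dependent terms multiplied by $C_{f,\varepsilon}$, $C_{g,\varepsilon}$ disappear --- and then sending $\varepsilon\to 0$.

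One minor point of precision worth tightening: you justify the uniform bound on $\int_\R |z_n|\,|z_n^\pm|\,dt$ by invoking boundedness of $(z_n)$ in $X$ and the embedding $X\hookrightarrow L^2$, but this silently uses that $(z_n^\pm)$ is itself bounded in $X$. That is true because the splitting $X=X^+\oplus X^-$ is orthogonal (so $\|z_n^\pm\|\le\|z_n\|$), and then $\|z_n^\pm\|_2\le\mu_0^{-1}\|z_n^\pm\|$ by \eqref{mu0:inequality}. It cannot be derived from \eqref{projection:inequality}, which is only stated for $q>2$, so it is worth saying explicitly which inequality controls the $L^2$-norm of the projections.
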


We are finally ready to prove Theorem \ref{existence:theorem}.
\begin{proof}[Proof of Theorem \ref{existence:theorem}]
    Let us consider the energy functional \eqref{energy:functional} associated to \eqref{eq:hamiltonian}, that is
    \[
        \cJ(z) = \frac12\|z^+\|^2-\frac12\|z^-\|^2 - \int_{\R} \Ga(t) F(z) \, dt + \lambda \int_{\R} \Ga(t) G(z) \, dt.
    \]
    By the computations made in Section \ref{sect:4}, the functional $\cJ$ satisfies the assumptions of Theorem \ref{th:abstrExistence}, hence we get the existence of a Cerami sequence for $\cJ$ such that
    \begin{equation}\label{fromThm}
        \sup_n \cJ(z_n) \lesssim 1, \quad (1+\|z_n\|) \cJ'(z_n) \to 0 \mbox{ in } X^*, \quad \inf_n \triple{z_n} \gtrsim 1,
    \end{equation}
    and, in particular, by Lemma \ref{lem:cerami_bdd}, such a sequence is bounded, so, up to a subsequence, it weakly converges to some $z \in X$. Suppose that
    $$
    \lim_{n\to+\infty} \sup_{y \in \R} \int_{y-1}^{y+1} |z_n|^2 \, dt = 0.
    $$
    By Lemma \ref{vanishing:condition}, applied for $\Psi(s)=|s|^s$, with $s > 2$, we get that $z_n \to 0$ in $L^s(\R;\R^{2N})$, for every $s > 2$. Now, by \eqref{fromThm},
    \begin{align*}
        o(1) 
        & = \cJ'(z_n)(z_n^{\pm}) = (z_n^+,z_n^{\pm}) - (z_n^-,z_n^{\pm}) - \int_{\R} \Ga(t)\left(f(z_n) - \lambda g(z_n)\right) \cdot z_n^{\pm} \, dt\\
        & = 
        (z_n^+,z_n^{\pm}) - (z_n^-,z_n^{\pm}) + o(1),
    \end{align*}
    that leads, respectively, to $\|z_n^+\| \to 0$ and $\|z_n^-\| \to 0$ as $n \to +\infty$. This means that also $\|z_n\| \to 0$ as $n \to +\infty$, and, in particular,
    \[
        \triple{z_n} \leq \|z_n\| \to 0, \text{ as } n \to +\infty.
    \]
    But this contradicts \eqref{fromThm}, therefore, there is a sequence $(y_n)_n \subset \R$ such that, setting $w_n:= z_n(\cdot+y_n)$
    \begin{equation}\label{nonvanishing:w}
        \liminf_{n \to +\infty} \int_{-1}^1 |w_n|^2 \, dt > 0.
    \end{equation}
    Without loss of generality, we may assume that $(y_n)_n \subset \Z$.
    Clearly, $\|w_n\|=\|z_n\|$, so $(w_n)_n$ is bounded in $X$ too, and $w_n \weakto w$ in $X$, for some $w \in X$, with $w \neq 0$ thanks to \eqref{nonvanishing:w}. Moreover,
    \[
        (1+\|w_n\|)\cJ'(w_n) \to 0 \text{ in } X^*.
    \]
    Exploiting now the weak-to-weak* property of $\cJ'$ (cf. (A2)), we get $\cJ'(w_n) \to \cJ'(w)$, with $\cJ'(w)=0$. 
\end{proof}

\section*{Acknowledgements}
The authors would like to thank the anonymous referee for the very valuable remarks and comments.

F. Bernini was partially supported by INdAM-GNAMPA Project 2024 titled \textit{Problemi spettrali e di ottimizzazione di forma: aspetti qualitative e stime quantitative}.
B. Bieganowski and D. Strzelecki were partly supported by National Science Centre, Poland (Grant No. 2022/47/D/ST1/00487).

\bibliography{Bibliography}
\bibliographystyle{abbrv}

\end{document}